\newcommand{\NN}{{\mathbb N}}
\newcommand{\ZZ}{{\mathbb Z}}
\newcommand{\CC}{{\mathbb C}}
\newcommand{\sym}{\mathfrak{S}}
\newcommand{\dual}[1]{{#1}^{\star}}
\newcommand{\opi}{\omega_{\pi}}
\newcommand{\osi}{\omega_{\sigma}}
\newcommand{\F}{\mathcal{F}}
\renewcommand{\O}{\Omega}
\newcommand{\pqbin}[2]{\genfrac{[}{]}{0pt}{0}{#1}{#2}}   
\newcommand{\tpqbin}[2]{\genfrac{[}{]}{0pt}{1}{#1}{#2}}  
\DeclareMathOperator{\SSF}{SSF}
\DeclareMathOperator{\maj}{\text{\sc maj}}
\DeclareMathOperator{\fix}{\mathrm{fix}}
\DeclareMathOperator{\sfix}{\mathrm{sfix}}
\DeclareMathOperator{\ssfix}{\mathrm{ssfix}}
\DeclareMathOperator{\des}{\mathrm{des}}
\DeclareMathOperator{\inv}{\text{\sc inv}}
\DeclareMathOperator{\ci}{\text{\sc ci}}
\DeclareMathOperator{\ca}{\text{\sc ca}}
\DeclareMathOperator{\mix}{\text{\sc mix}}
\DeclareMathOperator{\exc}{\mathrm{exc}}
\DeclareMathOperator{\lmax}{\mathrm{lmax}}
\DeclareMathOperator{\comp}{\mathrm{comp}}
\DeclareMathOperator{\stat}{\mathrm{stat}}
\DeclareMathOperator{\Int}{\mathrm{Int}}
\DeclareMathOperator{\al}{\mathrm{alph}}
\DeclareMathOperator{\SimSun}{\mathfrak{simsun}}
\DeclareMathOperator{\Andre}{\mathfrak{andr\mathaccent 19 e}}
\newcommand{\pattern}[4]{
  \raisebox{0.6ex}{
  \begin{tikzpicture}[scale=0.35, baseline=(current bounding box.center), #1]
    \foreach \x/\y in {#4}
      \fill[pattern=north east lines, pattern color=black!45] (\x,\y) rectangle +(1,1);
    \draw (0.01,0.01) grid (#2+0.99,#2+0.99);
    \foreach \x/\y in {#3}
      \filldraw (\x,\y) circle (5pt);
  \end{tikzpicture}}\;
}
\newcommand{\scl}{0.8}
\newcommand{\simsun}{
  \pattern{scale=\scl}{3}{1/3,2/2,3/1}{1/0,1/1,1/2,2/0,2/1,2/2}
}
\newcommand{\andre}{
  \pattern{scale=\scl}{3}{1/3,2/2,3/1}{1/0,1/3,2/0,2/3}
}
\theoremstyle{plain}
\newtheorem{theorem}{Theorem}
\newtheorem*{theorem*}{Theorem}
\newtheorem{corollary}[theorem]{Corollary}
\newtheorem*{corollary*}{Corollary}
\newtheorem{lemma}[theorem]{Lemma}
\newtheorem*{lemma*}{Lemma}
\newtheorem{proposition}[theorem]{Proposition}
\newtheorem*{proposition*}{Proposition}
\newtheorem*{conjecture*}{Conjecture}
\theoremstyle{definition}
\newtheorem*{definition*}{Definition}
\newtheorem*{example*}{Example}
\newtheorem*{problem*}{Problem}
\theoremstyle{remark}
\newtheorem*{remark*}{Remark}
\title[Mesh patterns and statistics as patterns]
      {Mesh patterns and the expansion of permutation 
        statistics as sums of permutation patterns}
\dedicatory{Dedicated to Doron Zeilberger on the occasion of his sixtieth birthday}
\author[P. Br\"and\'en]{Petter Br\"and\'en}
\author[A. Claesson]{Anders Claesson}
\thanks{PB is a Royal Swedish Academy of Sciences Research Fellow
  supported by a grant from the Knut and Alice Wallenberg
  Foundation. AC was supported by grant no. 090038011 from the
  Icelandic Research Fund. }
  \address{P. Br\"and\'en, Department of Mathematics, Stockholm University, 
    SE-106 91 Stockholm, Sweden}
\address{A. Claesson, Department of Computer and Information Sciences,
    University of Strathclyde, Glasgow, G1 1XH, UK}
\begin{document}
\begin{abstract}
  Any permutation statistic $f:\sym\to\CC$ may be represented uniquely
  as a, possibly infinite, linear combination of (classical)
  permutation patterns: $f= \Sigma_\tau\lambda_f(\tau)\tau$. To provide
  explicit expansions for certain statistics, we introduce a new type
  of permutation patterns that we call mesh patterns.  Intuitively,
  an occurrence of the mesh pattern $p=(\pi,R)$ is
  an occurrence of the permutation
  pattern $\pi$ with additional restrictions specified by $R$ on the
  relative position of the entries of the occurrence.  We show that,
  for any mesh pattern $p=(\pi,R)$, we have $\lambda_p(\tau) =
  (-1)^{|\tau|-|\pi|}\dual{p}(\tau)$ where $\dual{p}=(\pi,R^c)$ is the
  mesh pattern with the same underlying permutation as $p$ but with
  complementary restrictions. We use this result to expand some well
  known permutation statistics, such as the number of left-to-right
  maxima, descents, excedances, fixed points, strong fixed points, and
  the major index. We also show that alternating permutations, Andr\'e
  permutations of the
  first kind and simsun permutations occur naturally as permutations
  avoiding certain mesh patterns. Finally, we provide new natural Mahonian 
  statistics.
\end{abstract}

\begin{center}
\end{center}

\maketitle

\thispagestyle{empty}

\section{Introduction}

\subsection{Mesh patterns}
Let $[a,b]$ be the integer interval $\{ i\in\ZZ : a\leq i\leq
b\}$. Denote by $\sym_n$ the set of permutations of $[1,n]$. A
\emph{mesh pattern} is a pair
$$p=(\pi,R)\,\text{ with $\pi\in\sym_k$ and $R\subseteq [0,k]\times [0,k]$.}
$$ An example is $p=\big(3241, \{(0,2), (1,3), (1,4), (4,2),
(4,3)\}\big)$. To depict this mesh pattern we plot the points $(i,\pi(i))$
in a Cartesian coordinate system, and for each $(i,j)\in R$ we shade
the unit square with bottom left corner $(i,j)$:
$$\pattern{}{ 4 }{ 1/3, 2/2, 3/4, 4/1 }{ 0/2, 1/3, 1/4, 4/2, 4/3 }
$$

Let $p=(\pi,R)$ be a mesh pattern with $k=|\pi|$, where $|\pi|$ denotes 
the number of letters in $\pi$, and let
$\tau\in\sym_n$.  We will think of $p$ as a function on permutations
that counts occurrences of $p$. Intuitively, $p(\tau)$ is the number
of ``classical" occurrences of $\pi$ in $\tau$ with additional
restrictions on the relative position of the entries of the occurrence
of $\pi$ in $\tau$. These restrictions say that no elements of $\tau$
are allowed in the shaded regions of the figure above. 
Formally, an
\emph{occurrence} of $p$ in $\tau$ is a subset $\omega$ of the plot of $\tau$,
$G(\tau)=\{(i,\tau(i)): i\in[1,n]\}$,  such that
there are order-preserving injections $\alpha,\beta:[1,k]\to [1,n]$
satisfying two conditions that we shall now describe. The first
condition is that $\omega$ is an occurrence of $\pi$ in the classical
sense. That is,
\begin{enumerate}
\item[(i)] $\omega = \big\{(\alpha(i),\beta(j)) : (i,j)\in G(\pi)\big\}$.
\end{enumerate}
Define 
$R_{ij}=[\alpha(i)+1, \alpha(i+1)-1]\times [\beta(j)+1,\beta(j+1)-1]$ 
for $i,j\in [0,k]$, where $\alpha(0)=\beta(0)=0$ and
$\alpha(k+1)=\beta(k+1)=n+1$.  Then the second condition is
\begin{enumerate}
\item[(ii)] if $(i,j)\in R$ then $R_{ij}\cap G(\tau)=\emptyset$. 
\end{enumerate}
Classical~\cite{SiSc}, vincular~\cite{BaSt} and bivincular~\cite{BCDK}
patterns can all be seen as special mesh patterns: $p=(\pi,R)$ is a
classical pattern if $R=\emptyset$; $p$ is a vincular pattern if $R$
is a union of vertical strips, $\{i\}\times [0,|\pi|]$; $p$ is a
bivincular pattern if $R$ is a union of vertical strips and horizontal
strips, $[0,|\pi|]\times\{i\}$. An example is provided by the following bivincular
pattern which has been studied by Bousquet-M\'elou \emph{et al.}~\cite{BCDK}:
$$\big(\,231,\,[0,3]\!\times\!\{1\}\cup \{1\}\!\times\![0,3]\,\big)
= \pattern{scale=\scl}{3}{1/2, 2/3, 3/1}{ 1/0, 1/1, 1/2, 1/3, 0/1, 2/1, 3/1}.
$$

It is also easy to write any barred pattern~\cite{West} with only
one barred letter as a mesh pattern. Indeed, if $\pi(i)$ is the only
barred letter of a given barred pattern $\pi$, then the corresponding
mesh pattern is $(\pi',\{(i-1, \pi(i)-1)\}$, where $\pi'$ is obtained
from $\pi$ by removing $\pi(i)$ and subtracting one from each letter
that is larger than $\pi(i)$. For instance, West~\cite{West}
characterized the permutations sortable by two passes through a stack
as those that avoid the classical pattern $2341$ and the barred pattern
$3\bar{5}241$. So, in terms of mesh patterns, it is the set of
permutations that avoid
$$
\pattern{scale=\scl}{4}{ 1/2,2/3,3/4,4/1 }{}\quad\text{and}\quad
\pattern{scale=\scl}{4}{ 1/3,2/2,3/4,4/1 }{ 1/4 }.
$$

The number of saturated chains in Young's
Lattice from $\hat 0$ (the empty partition) to a partition $\lambda$
is the number of standard Young tableaux of shape $\lambda$, and the
total number of saturated chains from $\hat 0$ to rank $n$ is the
number of involutions in $\sym_n$. Bergeron \emph{et al.}~\cite{BBD}
studied a composition analogue of Young's lattice. They gave an
embedding of the saturated chains from $\hat 0$ to rank $n$ into
$\sym_n$, and they characterized the image under this embedding as
follows: Let $T(\pi)$ be the increasing binary tree corresponding to
$\pi$.\footnote{ If $\pi$ is the empty word then $T(\pi)$ is the empty
  tree. Otherwise, write $\pi=\sigma a \tau$ with $a=\min(\pi)$, then
  $T(\pi)$ is the binary tree with root $a$ attached to a left subtree
  $T(\sigma)$ and a right subtree $T(\tau)$.  } Then $\pi\in\sym_n$
encodes a saturated chain from $\hat 0$ to rank $n$ if and only if
for any vertex $v$ of $T(\pi)$ that do not belong to the leftmost
branch of $T(\pi)$ and has two sons, the label of the left son is less
that the label of the right son. There is a unique smallest permutation
not satisfying this, namely $1423$; the corresponding increasing binary tree is
$$
\begin{tikzpicture}[scale=0.35, inner sep=2pt]
  \node (1) at (0.1,0.3) {$1$};
  \node (2) at (2,2) {$2$};
  \node (3) at (3,4) {$3$};
  \node (4) at (1,4) {$4$};
  \draw (1) -- (2) -- (3);
  \draw (2) -- (4);
\end{tikzpicture}
$$ 
In terms of mesh patterns the permutations encoding saturated chains
from $\hat 0$ to rank $n$ are precisely those that avoid
$$\pattern{scale=\scl}{4}{1/1,2/4,3/2,4/3}
{1/0,1/1,1/2,1/3,2/0,2/1,2/2,2/3,3/0,3/1,3/2,3/3}.
$$

By $p(\tau)$ we shall denote the number of occurrences of $p$ in
$\tau$, thus regarding $p$ as a function from $\sym=\cup_{n\geq
  0}\sym_n$ to $\NN$. We will now explain how a few well known 
permutation statistics may be expressed in terms  of mesh patterns. A
\emph{left-to-right maximum} of $\tau$ is an index $j$ such that
$\tau(i)<\tau(j)$ for $i<j$. We write $\lmax(\tau)$ for the number of
left-to-right maxima in $\tau$. A \emph{descent} is an $i$ such that
$\tau(i)>\tau(i+1)$. The number of descents is denoted
$\des(\tau)$. An \emph{inversion} is a pair $i<j$ such that
$\tau(i)>\tau(j)$. The number of inversions is denoted $\inv(\tau)$.
For permutations $\alpha$ and $\beta$, let their \emph{direct sum} be
$\alpha\oplus\beta=\alpha\beta'$, where $\beta'$ is obtained from
$\beta$ by adding $|\alpha|$ to each of its letters, and juxtaposition
denotes concatenation. We say that $\tau$ has $k$ \emph{components},
and write $\comp(\tau)=k$, if $\tau$ is the direct sum of $k$, but not
$k+1$, non-empty permutations. We have
$$
\lmax = \pattern{scale=\scl}{1}{1/1}{ 0/1 };\quad
\inv  = \pattern{scale=\scl}{2}{1/2, 2/1}{};\quad
\des  = \pattern{scale=\scl}{2}{1/2, 2/1}{ 1/0, 1/1, 1/2 };\quad
\comp = 
\pattern{scale=\scl}{1}{1/1}{ 0/0, 0/1, 1/0, 1/1 } +
\pattern{scale=\scl}{2}{1/1, 2/2}{ 0/1, 0/2, 1/1, 1/2, 2/0 }.
$$

\subsection{Permutation statistics and an incidence algebra}
In what follows we will often simply write $\pi$ instead of
$(\pi,\emptyset)$, so $\inv=21$. We shall see that any function $\stat :
\sym \rightarrow \CC$ may be represented uniquely as a (possibly
infinite) sum $\stat = \sum_{\pi \in \sym}\lambda(\pi)\pi$, where
$\{\lambda(\pi)\}_{\pi \in \sym}\subset \CC$.

Let $Q$ be a locally finite poset, and let $\Int(Q)=\{ (x,y) \in
Q\times Q : x \leq y\}$. Recall that the \emph{incidence algebra},
$I(Q)$, of $(Q,\leq)$ over $\CC$ is the $\CC$-algebra of all functions
$F:\Int(Q) \to\CC$ with multiplication (convolution) defined by
$$(FG)(x,z) = \sum_{x\leq y\leq z}F(x,y)G(y,z),
$$ and identity, $\delta$, defined by $\delta(x,y) = 1$ if $x = y$,
and $\delta(x,y) = 0$ if $x \neq y$; see for example \cite[Sec. 3.6]{St}.

Define a partial order on $\sym$ by $\pi\leq\sigma$ in $\sym$
if $\pi(\sigma)>0$.  Define $P\in I(\sym)$ by
$$P(\pi,\sigma) = \pi(\sigma).
$$ Note that $P$ is invertible because $P(\pi,\pi)=1$, see
\cite[Prop. 3.6.2]{St}. Therefore, for any permutation statistic,
$\stat:\sym\to\CC$, there are unique scalars
$\{\lambda(\sigma)\}_{\sigma \in \sym} \subset \CC$ such that
\begin{equation}\label{stat}
  \stat = \sum_{\sigma\in\sym}\lambda(\sigma) \sigma.
\end{equation}
In other words, any permutation statistic can be written as a unique,
typically infinite, formal linear combination of (classical) patterns.
Indeed, $I(\sym)$ acts on the right of $\CC^{\sym}$ by
$$(f\ast F)(\pi) = \sum_{\sigma\leq\pi}f(\sigma)F(\sigma,\pi).
$$ Thus \eqref{stat} is equivalent to $\stat = \lambda \ast P$ and,
since $P$ is invertible, $\lambda = \stat \ast P^{-1}$.

\section{The Reciprocity Theorem}

The following mysterious looking identity for the descent statistic
$$\des
=\!\!\mathop{\sum_{\pi\in\sym}}_{\pi(1) > \pi(|\pi|)}\!\!(-1)^{|\pi|}\pi
$$ is an instance of what we call the Reciprocity Theorem for mesh
patterns. It tells us what the coefficients
$\{\lambda(\sigma)\}_{\sigma \in \sym} $ are in the special case when
$\stat=p$, a mesh pattern.
The Reciprocity Theorem may be viewed as a justification for the
introduction of mesh patterns. Indeed it shows that to describe the
coefficients of ``generalized permutation patterns'' requires that the
set of patterns is closed under taking complementary restrictions.

\begin{theorem}[Reciprocity]
  Let $p = (\pi,R)$ be a mesh pattern and let $\dual{p} = (\pi,R^c)$,
  where $R^c = [0,|\pi|]^2\setminus R$. Then
  $$p = \sum_{\sigma\in\sym}\lambda(\sigma)\sigma,\text{ where }
  \lambda(\sigma) = (-1)^{|\sigma|-|\pi|}\dual{p}(\sigma).
  $$
\end{theorem}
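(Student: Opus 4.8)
The plan is to work entirely inside the incidence-algebra framework set up above. Since $P$ is invertible and the expansion \eqref{stat} is unique, proving the theorem amounts to verifying the single identity
$$p(\tau) = \sum_{\sigma\in\sym}(-1)^{|\sigma|-|\pi|}\,\dual{p}(\sigma)\,\sigma(\tau)$$
for every $\tau\in\sym$; this is exactly the assertion that $p=\lambda\ast P$ with $\lambda(\sigma)=(-1)^{|\sigma|-|\pi|}\dual{p}(\sigma)$. So I would fix $\tau$ and analyze the right-hand side combinatorially.

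The first step is to read each product $\dual{p}(\sigma)\,\sigma(\tau)$ as counting pairs consisting of an occurrence of $\dual{p}$ in $\sigma$ together with a classical occurrence of $\sigma$ in $\tau$. A classical occurrence of $\sigma$ in $\tau$ is simply a subset $S\subseteq G(\tau)$ of size $|\sigma|$ whose pattern is $\sigma$, and the occurrence of $\dual{p}$ inside it picks out a sub-occurrence $\omega\subseteq S$ of $\pi$. Since every subset of $G(\tau)$ has a unique pattern, summing over all $\sigma$ collapses to summing directly over pairs $(\omega,S)$ in $\tau$, where $\omega$ is a classical occurrence of $\pi$ and $\omega\subseteq S\subseteq G(\tau)$, each weighted by the sign $(-1)^{|S|-|\pi|}$.

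The crucial point — and the step I expect to require the most care — is translating the defining condition of $\dual{p}=(\pi,R^c)$ through this composition. Because the injections $\alpha,\beta$ are order-isomorphisms onto their images, a point of $S$ falls into the box $R_{ij}$ relative to $\omega$ in $\tau$ exactly when the corresponding point of $\sigma$ falls into the analogous box relative to the chosen occurrence of $\pi$ in $\sigma$. I would also record here that the boxes $R_{ij}$ partition $G(\tau)\setminus\omega$ (every unused entry of $\tau$ has both coordinates in a gap between consecutive values of $\alpha$, resp.\ $\beta$). Hence ``$\omega$ is an occurrence of $\dual{p}$ within $S$'' says precisely that every box $R_{ij}$ with $(i,j)\in R^c$ is free of points of $S$, i.e.\ that all the extra points $S\setminus\omega$ lie in the union $B(\omega):=G(\tau)\cap\bigcup_{(i,j)\in R}R_{ij}$ of the $R$-boxes. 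Consequently the admissible $S$ are exactly the sets $\omega\cup T$ with $T\subseteq B(\omega)$.

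The final step is a sign cancellation. Interchanging the order of summation, the right-hand side becomes
$$\sum_{\omega}\ \sum_{T\subseteq B(\omega)}(-1)^{|T|}=\sum_{\omega}\,[\,B(\omega)=\emptyset\,],$$
since the inner sum equals $(1-1)^{|B(\omega)|}$, which is $1$ when $B(\omega)=\emptyset$ and $0$ otherwise, while the outer sum runs over classical occurrences $\omega$ of $\pi$ in $\tau$. But $B(\omega)=\emptyset$ means no entry of $\tau$ lies in any box $R_{ij}$ with $(i,j)\in R$, which is exactly condition (ii) for $\omega$ to be an occurrence of $p=(\pi,R)$. Thus the surviving terms count precisely the occurrences of $p$ in $\tau$, giving $p(\tau)$ and completing the proof.
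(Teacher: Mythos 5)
Your proof is correct and is essentially the paper's own argument: both sides are expanded as a signed sum over nested pairs (a classical occurrence $\omega$ of $\pi$ together with a superset occurrence of some $\sigma$), and everything is killed by the cancellation $\sum_{T\subseteq B(\omega)}(-1)^{|T|}=(1-1)^{|B(\omega)|}$, leaving exactly the occurrences of the complementary pattern. The only differences are cosmetic: you verify the stated identity for $p$ directly while the paper verifies the equivalent identity with $p$ and $\dual{p}$ interchanged, and your observation that the boxes $R_{ij}$ partition $G(\tau)\setminus\omega$ makes explicit a step the paper leaves implicit.
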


\begin{proof}
  We need to prove that $\dual{p}(\tau) = \sum_{\sigma\leq\tau}
  (-1)^{|\pi|-|\sigma|}p(\sigma)\sigma(\tau)$ for all $\tau \in
  \sym$. We will think of an occurrence of a pattern $p$ in $\sigma$
  as the corresponding subword of $\sigma$. The right-hand side may be
  written as
  \begin{equation}\label{star}
    \sum_{(\opi, \osi)}(-1)^{|\pi|-|\sigma|},
  \end{equation}
  in which sum is over all pairs $(\opi,\osi)$ where $\opi$ is a
  occurrence of $p$ in $\osi$ and $\osi$ is an occurrence of some 
  $\sigma \leq \tau$. Expression~\eqref{star} may, in turn, be written as
  $$\sum_{\opi}(-1)^{|\pi|}\mu(\opi),
  $$ where $\mu(\opi)$ is the contribution from a given occurrence
  $\opi$ of $\pi$. Given $\opi$, to create a pair $(\opi,\osi)$ we
  include any elements which are in squares \emph{not} indexed by the
  restrictions $R$. Let $X(\opi)$ be the set of such elements. Hence
  $$\mu(\opi) = \sum_{S\subseteq X(\opi)} (-1)^{|\pi|+|S|}.
  $$ Thus $\mu(\opi)=0$ unless $X(\opi)=\emptyset$. Clearly
  $X(\opi)=\emptyset$ if and only if $\opi=\osi$ and $\osi$ is an
  occurrence of $\dual{p}$. Consequently,
  $\sum_{\opi}(-1)^{|\pi|}\mu(\opi) = \dual{p}(\tau)$, as claimed.
\end{proof}

\begin{corollary}[Inverse Theorem]
  The inverse of $P$ in $I(\sym)$ is given by
  $$ P^{-1}(\pi,\tau) = (-1)^{|\tau|-|\pi|}P(\pi,\tau).
  $$
  Equivalently, if $f,g : \sym \rightarrow \CC$, then 
  $$
  f(\pi) = \sum_{\sigma \leq \pi} g(\sigma)\sigma(\pi), \quad\text{ for all } \pi\in\sym
  $$
  if and only if 
  $$ g(\pi) = \sum_{\sigma \leq \pi}
  f(\sigma)(-1)^{|\pi|-|\sigma|}\sigma(\pi), \quad\text{ for all }
  \pi \in \sym.
  $$
\end{corollary}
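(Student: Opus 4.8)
The plan is to read $P^{-1}$ off directly from the Reciprocity Theorem and then obtain the ``equivalently'' clause as a purely formal restatement inside the right $I(\sym)$-module $\CC^{\sym}$. First I would record the candidate element $Q\in I(\sym)$ given by $Q(\pi,\tau)=(-1)^{|\tau|-|\pi|}P(\pi,\tau)$ and aim to prove the single identity $QP=\delta$. Because $P(\pi,\pi)=1$ makes $P$ invertible in the incidence algebra, a one-sided inverse is automatically the two-sided inverse, so $Q=P^{-1}$ will follow from $QP=\delta$ alone; this is the only place where invertibility of $P$ is used.

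The key computation is to feed the Reciprocity Theorem its simplest nontrivial input, the classical pattern $p=(\pi,\emptyset)$. Its dual is the fully shaded pattern $\dual{p}=(\pi,[0,|\pi|]^2)$, and I would first argue that $\dual{p}(\tau)=\delta(\pi,\tau)$: an occurrence of $\dual{p}$ forces every gap region $R_{ij}$ to be empty, and since any entry of $\tau$ lying outside the occurrence would necessarily fall into some such region, the occurrence must exhaust $\tau$, whence $\tau=\pi$. Substituting $p(\sigma)=\pi(\sigma)=P(\pi,\sigma)$ and $\sigma(\tau)=P(\sigma,\tau)$ into the identity $\dual{p}(\tau)=\sum_{\sigma\leq\tau}(-1)^{|\pi|-|\sigma|}p(\sigma)\sigma(\tau)$ established in the proof of the Reciprocity Theorem then gives $\delta(\pi,\tau)=\sum_{\pi\leq\sigma\leq\tau}(-1)^{|\sigma|-|\pi|}P(\pi,\sigma)P(\sigma,\tau)$, which is exactly $(QP)(\pi,\tau)=\delta(\pi,\tau)$. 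This proves the first assertion.

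For the second part I would translate the two displayed sums into the module notation of the introduction: the first reads $f=g\ast P$ and the second, after inserting the formula just proved for $P^{-1}$, reads $g=f\ast P^{-1}$. Since $\ast$ is an associative right action of $I(\sym)$ with identity $\delta$ (a one-line reindexing confirms $(f\ast F)\ast G=f\ast(FG)$ and $f\ast\delta=f$), the equivalence is immediate: from $f=g\ast P$ one gets $f\ast P^{-1}=g\ast(PP^{-1})=g$, and conversely from $g=f\ast P^{-1}$ one gets $g\ast P=f\ast(P^{-1}P)=f$.

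I expect the only real friction to be bookkeeping. The two points to get right are, first, that the dual of the empty-restriction pattern really is the all-shaded pattern whose sole occurrence lives in $\pi$ itself (so that $\dual{p}=\delta(\pi,\slot)$), and second, that the sign satisfies $(-1)^{|\sigma|-|\pi|}=(-1)^{|\pi|-|\sigma|}$, so that the Reciprocity identity matches $QP$ rather than $PQ$. Once the correct special case is identified, the upgrade from the one-sided identity $QP=\delta$ to $P^{-1}=Q$ is the single conceptual step, and it is supplied by the diagonal entries $P(\pi,\pi)=1$.
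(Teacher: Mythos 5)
Your proposal is correct and follows essentially the same route as the paper: both specialize the Reciprocity Theorem to the pair consisting of the classical pattern $(\pi,\emptyset)$ and its fully shaded dual $(\pi,[0,|\pi|]^2)$, observe that the latter counts $\delta(\pi,\tau)$, and read off $\delta=QP$ with $Q(\pi,\tau)=(-1)^{|\tau|-|\pi|}P(\pi,\tau)$. Your explicit justification that the all-shaded pattern occurs only in $\pi$ itself, and your spelling out of the module-action argument for the ``equivalently'' clause, are details the paper leaves implicit, but the underlying argument is the same.
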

\begin{proof}
  For $\pi\in\sym_k$, let $p=(\pi,[0,k]\times[0,k])$. Then
  $\dual{p}=(\pi,\emptyset)$ and $p(\tau)=\delta(\pi,\tau)$, so by
  the Reciprocity Theorem,
  $$ \delta(\pi,\tau) = \sum_{\pi\leq\sigma\leq\tau}(-1)^{|\sigma|-|\pi|}
  P(\pi,\sigma)P(\sigma,\tau),
  $$
  from which the result follows.
\end{proof}

\section{Expansions of some permutation statistics}

Babson and Steingr\'{\i}msson's~\cite{BaSt} classification of Mahonian
statistics is in terms of vincular patterns. For example, the
\emph{major index}, $\maj$, can be defined as
$$
 (21,\{1\}\times[0,2]) +
(132,\{2\}\times[0,3]) +
(231,\{2\}\times[0,3]) +
(321,\{2\}\times[0,3]),
$$
or in pictures:
\begin{align*}
\maj &=
\pattern{scale=\scl}{ 2 }{ 1/2, 2/1 }{ 1/0, 1/1, 1/2 } +
\pattern{scale=\scl}{ 3 }{ 1/1, 2/3, 3/2 }{ 2/0, 2/1, 2/2, 2/3 } +
\pattern{scale=\scl}{ 3 }{ 1/2, 2/3, 3/1 }{ 2/0, 2/1, 2/2, 2/3 } +
\pattern{scale=\scl}{ 3 }{ 1/3, 2/2, 3/1 }{ 2/0, 2/1, 2/2, 2/3 }.
\intertext{By the Reciprocity Theorem we may represent the major index as $\maj=
\sum_{\pi \in \sym} \lambda(\pi)\pi$ where}
(-1)^{|\,\cdot\,|}\lambda(\,\cdot\,) &=
\pattern{scale=\scl}{ 2 }{ 1/2, 2/1 }
        { 0/0, 0/1, 0/2, 2/0, 2/1, 2/2 } -
\pattern{scale=\scl}{ 3 }{ 1/1, 2/3, 3/2 }
        { 0/0, 0/1, 0/2, 0/3, 1/0, 1/1, 1/2, 1/3, 3/0, 3/1, 3/2, 3/3 } -
\pattern{scale=\scl}{ 3 }{ 1/2, 2/3, 3/1 }
        { 0/0, 0/1, 0/2, 0/3, 1/0, 1/1, 1/2, 1/3, 3/0, 3/1, 3/2, 3/3 } -
\pattern{scale=\scl}{ 3 }{ 1/3, 2/2, 3/1 }
        { 0/0, 0/1, 0/2, 0/3, 1/0, 1/1, 1/2, 1/3, 3/0, 3/1, 3/2, 3/3 }.
\intertext{This last expression simplifies to}
\lambda(\pi) &=
\begin{cases}
  1         & \text{if $\pi=21$},\\
  (-1)^{n}   & \text{if $\pi(2)<\pi(n)<\pi(1)$},\\
  (-1)^{n+1} & \text{if $\pi(1)<\pi(n)<\pi(2)$},\\
  0         & \text{otherwise,}
\end{cases}
\end{align*}
where $n=|\pi|$. 

Now, let us plot the values of $\pi \in \sym$ in a Cartesian
coordinate system and locate the position of $x=\pi(j)$:
$$
\begin{tikzpicture}[>=stealth', yscale=0.42, xscale=0.47, baseline=(x.base)]
  \node at (-.5, 3.5) {$x$};
  \node at (3.5, -.5) {$j$};
  
  \draw[black] (3.5,0)--(3.5,7);
  \draw[black] (0,3.5)--(7,3.5);
  
  \filldraw[black] (3.5,3.5) circle (6pt);
  \node (x) at (3.5,3.5) {};
  \node at (1.7,5.5) {$Q_2(\pi; x)$};
  \node at (5.4,5.5) {$Q_1(\pi; x)$};
  \node at (5.4,1.5) {$Q_4(\pi; x)$};
  \node at (1.7,1.5) {$Q_3(\pi; x)$};
  \draw[->] (-0.5,0)--(7.6,0);
  \draw[->] (0,-0.5)--(0,7.6);
\end{tikzpicture}
$$
Here $Q_2(\pi;x)=\{\pi(i):i<j\text{ and }\pi(i)>x\}$, and the sets
$Q_k(\pi;x)$ for $k=1,3,4$ are defined similarly. Many permutation
patterns are defined in terms of the $Q_k$'s.
\begin{itemize}
\item $x$ is a \emph{left-to-right maximum} if
  $Q_2(\pi;x)=\emptyset$. Recall that $\lmax(\pi)$ denotes the number
  of left-to-right maxima in $\pi$;
\item $x$ is a \emph{fixed point} if
  $|Q_2(\pi;x)|=|Q_4(\pi;x)|$. Denote by $\fix(\pi)$ the number of
  fixed points in $\pi$;
\item The \emph{excess} of $x$ in $\pi$ is
  $x-j=|Q_4(\pi;x)|-|Q_2(\pi;x)|$. For $k \in \ZZ$, let $\exc_k(\pi)$
  be the number of $x$ in $\pi$ for which
  $|Q_4(\pi;x)|-|Q_2(\pi;x)|=k$;
\item $x$ is an \emph{excedance top} if $|Q_4(\pi;x)| >
  |Q_2(\pi;x)|$. Denote by $\exc(\pi)$ the number of excedance tops in
  $\pi$;
\item $x$ is a \emph{strong fixed point} if
  $Q_2(\pi;x)=Q_4(\pi;x)=\emptyset$, see \cite[Ex. 1.32b]{St}. Denote by
  $\sfix(\pi)$ the number of strong fixed points in $\pi$;
\item $x$ is a \emph{skew strong fixed point} if
  $Q_1(\pi;x)=Q_3(\pi;x)=\emptyset$. Denote by $\ssfix(\pi)$ the number of
  skew strong fixed points in $\pi$. Moreover, let $\SSF(\pi)$ be the
  set of skew strong fixed points in $\pi$.
\end{itemize}  

\begin{proposition}
  $$ \lmax 
  =\!\!\mathop{\sum_{\pi\in\sym}}_{\pi(|\pi|) =1}\!\!(-1)^{|\pi|-1}\pi.
  $$
\end{proposition}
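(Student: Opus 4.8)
The plan is to recognize $\lmax$ as a single mesh pattern and then apply the Reciprocity Theorem directly. As recorded in the introduction, $\lmax$ equals the mesh pattern whose underlying permutation is the single letter $1$ and whose only shaded square sits in the upper-left corner; an occurrence is just an entry $x=\tau(j)$ with $Q_2(\tau;x)=\emptyset$, that is, a left-to-right maximum. So I would set $p=(1,\{(0,1)\})$, a pattern with $|\pi|=1$, and invoke the theorem.

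By the Reciprocity Theorem the coefficient of $\sigma$ in the expansion of $p$ as a sum of classical patterns is $\lambda(\sigma)=(-1)^{|\sigma|-1}\dual{p}(\sigma)$, where $\dual{p}=(1,R^c)$ with $R^c=[0,1]^2\setminus\{(0,1)\}=\{(0,0),(1,0),(1,1)\}$. Thus the whole problem reduces to identifying which permutations $\sigma$ satisfy $\dual{p}(\sigma)\neq 0$ and to computing that value.

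The main (and essentially only) step is to read off what $\dual{p}$ counts. The three shaded squares of $\dual{p}$ translate, in the quadrant language, into the requirements $Q_3(\sigma;x)=Q_4(\sigma;x)=Q_1(\sigma;x)=\emptyset$ for an occurrence $x=\sigma(j)$. The conditions $Q_3=Q_4=\emptyset$ say that no entry of $\sigma$ lies below $x$, forcing $x=1$; the remaining condition $Q_1=\emptyset$ then says that no entry lies to the right of $x$, forcing $j=|\sigma|$. Hence $\sigma$ admits an occurrence of $\dual{p}$ exactly when its last letter is $1$, and in that case the occurrence is unique. Therefore $\dual{p}(\sigma)=1$ if $\sigma(|\sigma|)=1$ and $\dual{p}(\sigma)=0$ otherwise.

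Substituting this into $\lambda(\sigma)=(-1)^{|\sigma|-1}\dual{p}(\sigma)$ yields $\lambda(\sigma)=(-1)^{|\sigma|-1}$ precisely when $\sigma(|\sigma|)=1$, and $0$ otherwise, so that $\lmax=\sum_{\sigma}\lambda(\sigma)\sigma=\sum_{\sigma(|\sigma|)=1}(-1)^{|\sigma|-1}\sigma$, which is the asserted identity. I do not expect any genuine obstacle: the content lies entirely in correctly interpreting the complementary shading of $\dual{p}$, and everything else is a direct quotation of the Reciprocity Theorem.
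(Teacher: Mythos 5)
Your proposal is correct and is essentially identical to the paper's own proof: the paper likewise writes $\lmax$ as the one-letter mesh pattern with only the upper-left square shaded, applies the Reciprocity Theorem, and observes that the dual pattern (with the other three squares shaded) is the indicator of the last letter being $1$. Your explicit quadrant computation of $\dual{p}$ just spells out the step the paper states without detail.
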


\begin{proof}
  The result follows from the Reciprocity Theorem because
  the function $\pi\mapsto\chi(\pi(|\pi|)=1)$ equals
  $\pattern{scale=\scl}{1}{1/1}{ 0/0, 1/0, 1/1 }$ and $\lmax =
  \pattern{scale=\scl}{1}{1/1}{ 0/1 }$.
\end{proof}

\begin{proposition}
  Let $k \in \ZZ$. Then
  $$
  \exc_k= \sum_{\pi}\left((-1)^{|\pi|-k-1}\sum_{x\in\SSF(\pi)}\binom{|\pi|-1}{x-k-1}\right)\pi.
  $$
  In particular 
  $$\fix = \sum_{\pi}\left((-1)^{|\pi|-1}\sum_{x\in\SSF(\pi)}\binom{|\pi|-1}{x-1}\right)\pi
  $$
  and 
  $$
  \exc =  \sum_{\pi}\left((-1)^{|\pi|-2}\sum_{x\in\SSF(\pi)}\binom{|\pi|-2}{x-2}\right)\pi.
  $$
\end{proposition}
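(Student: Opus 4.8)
The plan is to avoid guessing a mesh-pattern expansion of $\exc_k$ and instead verify the claimed coefficients through the Inverse Theorem. Writing $\exc_k=\sum_\pi\lambda(\pi)\pi$, the Inverse Theorem makes this equivalent to $\lambda(\pi)=\sum_{\sigma\le\pi}\exc_k(\sigma)(-1)^{|\pi|-|\sigma|}\sigma(\pi)$ for every $\pi$, a finite alternating sum, so it suffices to evaluate it and match it to $(-1)^{|\pi|-k-1}\sum_{x\in\SSF(\pi)}\binom{|\pi|-1}{x-k-1}$. The first move is to reinterpret the sum combinatorially: since $\sigma(\pi)$ counts the occurrences (subsets) $\omega$ of the plot $G(\pi)$ whose pattern is $\sigma$, and $\exc_k(\sigma)$ depends only on the pattern of $\omega$, one gets $\sum_{\sigma\le\pi}\exc_k(\sigma)\sigma(\pi)=\sum_{\omega\subseteq G(\pi)}e_k(\omega)$, where $e_k(\omega)$ is the number of points of $\omega$ whose excess \emph{within $\omega$} equals $k$. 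Hence $\lambda(\pi)=\sum_{\omega}(-1)^{|\pi|-|\omega|}e_k(\omega)$.

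Next I would interchange summation, fixing a point $y$ of value $x$ and summing over those $\omega\ni y$ in which $y$ has excess $k$. Relative to $y$ the remaining points of $\pi$ split into the quadrants $Q_1,\dots,Q_4$, and choosing $\omega$ amounts to choosing a subset of each quadrant independently; the excess of $y$ within $\omega$ is $|\omega\cap Q_4|-|\omega\cap Q_2|$, which does not involve $Q_1$ or $Q_3$. The contribution of $y$ therefore factors, and the factors coming from $Q_1$ and $Q_3$ are $\sum_{S\subseteq Q_i}(-1)^{|S|}$, which vanish unless $Q_1=Q_3=\emptyset$. This is the conceptual core of the argument and explains why only skew strong fixed points survive: the sum collapses to a sum over $x\in\SSF(\pi)$.

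For a skew strong fixed point of value $x$ one has $|Q_2|=|\pi|-x$ and $|Q_4|=x-1$, so after pulling out the common sign $(-1)^{|\pi|-1}$ the surviving factor is $\sum_{b-a=k}(-1)^{a+b}\binom{|\pi|-x}{a}\binom{x-1}{b}$. Substituting $b=a+k$ makes the sign constant, $(-1)^{a+b}=(-1)^{k}$, and what remains is a Vandermonde convolution $\sum_a\binom{|\pi|-x}{a}\binom{x-1}{a+k}=\binom{|\pi|-1}{x-k-1}$. Recombining signs yields exactly $(-1)^{|\pi|-k-1}\binom{|\pi|-1}{x-k-1}$ per skew strong fixed point, which is the asserted formula. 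I expect this binomial evaluation, together with the bookkeeping that justifies the quadrant factorization, to be the only substantive step; everything else is formal.

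Finally, the two special cases follow by specialization. Setting $k=0$ gives $\fix=\exc_0$ at once. For $\exc=\sum_{k\ge1}\exc_k$ I would sum the coefficient over $k\ge1$; writing $m=|\pi|-1$ and invoking the alternating partial-sum identity $\sum_{j=0}^{r}(-1)^j\binom{m}{j}=(-1)^{r}\binom{m-1}{r}$ collapses the telescoping sum to $(-1)^{|\pi|-2}\binom{|\pi|-2}{x-2}$, which is precisely the stated expansion for $\exc$.
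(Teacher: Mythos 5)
Your proposal is correct and follows essentially the same route as the paper: expand via the Inverse Theorem, reduce the coefficient to a signed sum over pairs $(x,\omega)$, show that only skew strong fixed points survive the cancellation coming from $Q_1\cup Q_3$, and finish with a Vandermonde convolution and the alternating partial-sum identity for $\exc$. The only cosmetic difference is that the paper implements the cancellation as an explicit sign-reversing involution (toggling the minimum of $Q_1(\pi;x)\cup Q_3(\pi;x)$), whereas you factor the alternating sum over the four quadrants and note that the $Q_1$ and $Q_3$ factors vanish unless those quadrants are empty --- two standard realizations of the same argument.
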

\begin{proof} 
  By the Inverse Theorem, $\exc_k=\sum_{\pi}\lambda_k(\pi)\pi$ where 
  $$
  \lambda_k(\pi) =
  \sum_{\sigma\leq\pi}(-1)^{|\pi|-|\sigma|}\exc_k(\sigma)\sigma(\pi).
  $$ 
  Let $\O_k(\pi)$ be the set of pair $(x,\omega)$ such that $\omega$
  is a subword of $\pi$ and $x$ is a letter of $\omega$ that has excess 
  $k$ in $\omega$. 
  Let $\al(\omega)$ denote the set of letters in $\omega$. Note that
  $(x,\omega)\in \O_k(\pi)$ if and only if $\big|Q_2(\pi;x) \cap
  \al(\omega)\big| +k= \big|Q_4(\pi;x) \cap \al(\omega)\big| $. Let
  $\alpha(x) = \min \left(Q_1(\pi;x) \cup Q_3(\pi;x)\right)$, 
  where $\min(\emptyset)=\infty$,  and
  define an involution $\Psi : \O_k(\pi) \rightarrow \O_k(\pi)$ by
  $$
  \Psi(x,\omega) = 
  \begin{cases}
    (x,\omega)                   &\text{if } \alpha(x)=\infty,\\
    (x,\omega\setminus\alpha(x)) &\text{if } \alpha(x)\in \al(\omega),\\
    (x,\omega\cup\alpha(x))      &\text{otherwise.}
  \end{cases}
  $$ 
  Here $\omega\setminus\alpha(x)$ denotes the word obtained by 
  deleting $\alpha(x)$, and $\omega\cup\alpha(x)$ the subword of $\pi$ obtained by 
  adding $\alpha(x)$ to $\omega$ at the correct position. 
  The mapping $\Psi$ is well-defined since the property of $x$
  having excess $k$ is invariant under adding elements to $Q_1(x)$ and
  $Q_3(x)$. Also, $\Psi$ reverses the sign, $(-1)^{|\pi|-|\omega|}$,
  on non fixed points. Moreover, $(x,\omega)$ is a fixed point if and
  only if $Q_1(x)= Q_3(x)=\emptyset$, that is, if and only if $x$ is a
  skew strong fixed point of $\pi$. It remains to determine the
  contribution of the skew strong fixed points $x$:
  \begin{align*}
    \mathop{\sum_{(x,\omega)\in \O_k(\pi)}}_{x\in\SSF(\pi)}(-1)^{|\pi|-|\omega|}
    &= \sum_j\binom{n-x}{j}\binom{x-1}{j+k}(-1)^{|\pi|-2j-k-1}\\[-2ex]
    &= (-1)^{|\pi|-k-1}\sum_j\binom{n-x}{j}\binom{x-1}{j+k}.
  \end{align*}
  Hence 
  $$(-1)^{|\pi|-k-1}\lambda_k(\pi) 
  = \sum_{x\in\SSF(\pi)}\sum_j\binom{n-x}{j}\binom{x-1}{j+k} 
  = \sum_{x \in\SSF(\pi)}\binom{|\pi|-1}{x-k-1},
  $$
  as claimed.
  The coefficient in front of $\pi$ in the expansion of $\exc$ is
  $\sum_{k\geq 1} \lambda_k(\pi)$. The expansion of $\exc$ then
  follows from
  $$
  \sum_{j=0}^k (-1)^j\binom n j = (-1)^k \binom {n-1} k.  
  $$
\end{proof}

\begin{proposition} 
  $$\sfix = \sum_\pi(-1)^{|\pi|-1}\ssfix(\pi) \pi.
  $$
\end{proposition}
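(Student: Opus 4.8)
The plan is to argue exactly as in the preceding proposition on $\exc_k$: apply the Inverse Theorem, reinterpret the resulting coefficient as a signed sum over pairs, and cancel most of the terms by a sign-reversing involution. Writing $\sfix = \sum_\pi\lambda(\pi)\pi$, the Inverse Theorem gives
$$\lambda(\pi) = \sum_{\sigma\leq\pi}(-1)^{|\pi|-|\sigma|}\sfix(\sigma)\sigma(\pi),$$
so it suffices to show this equals $(-1)^{|\pi|-1}\ssfix(\pi)$. First I would reinterpret the right-hand side combinatorially. Let $\Theta(\pi)$ be the set of pairs $(x,\omega)$ where $\omega$ is a subword of $\pi$ and $x$ is a letter of $\omega$ that is a strong fixed point of $\omega$. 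Since an occurrence of a pattern $\sigma$ in $\pi$ together with a strong fixed point of $\sigma$ contributes $(-1)^{|\pi|-|\sigma|}=(-1)^{|\pi|-|\omega|}$, the displayed sum becomes $\sum_{(x,\omega)\in\Theta(\pi)}(-1)^{|\pi|-|\omega|}$.

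The key observation is that membership in $\Theta(\pi)$ depends only on which elements of $\omega$ lie in $Q_2(\pi;x)$ and $Q_4(\pi;x)$: indeed $x$ is a strong fixed point of $\omega$ precisely when $Q_2(\pi;x)\cap\al(\omega)=\emptyset$ and $Q_4(\pi;x)\cap\al(\omega)=\emptyset$. Hence this condition is invariant under adding or removing elements of $Q_1(\pi;x)\cup Q_3(\pi;x)$, and I would reuse verbatim the involution $\Psi$ from the previous proof: set $\alpha(x)=\min\left(Q_1(\pi;x)\cup Q_3(\pi;x)\right)$ with $\min(\emptyset)=\infty$, and toggle the presence of $\alpha(x)$ in $\omega$. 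As before, $\Psi$ is a well-defined, sign-reversing involution on $\Theta(\pi)$, and its fixed points are exactly the pairs $(x,\omega)$ with $Q_1(\pi;x)=Q_3(\pi;x)=\emptyset$, that is, those with $x\in\SSF(\pi)$.

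The only new point, and where the argument becomes simpler than the $\exc_k$ case rather than harder, is the evaluation of the fixed-point contribution. If $x\in\SSF(\pi)$ then $Q_1(\pi;x)=Q_3(\pi;x)=\emptyset$, so every letter of $\pi$ other than $x$ lies in $Q_2(\pi;x)\cup Q_4(\pi;x)$. Because a pair $(x,\omega)\in\Theta(\pi)$ forbids $\omega$ from meeting $Q_2(\pi;x)$ or $Q_4(\pi;x)$, the only admissible subword is $\omega=x$, contributing $(-1)^{|\pi|-1}$. Thus the binomial sum that appeared in the $\exc_k$ computation collapses to a single term, and summing over the $\ssfix(\pi)$ skew strong fixed points yields $\sum_{(x,\omega)\in\Theta(\pi)}(-1)^{|\pi|-|\omega|}=(-1)^{|\pi|-1}\ssfix(\pi)$, which is exactly $\lambda(\pi)$.

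I expect the routine checks that $\Psi$ is well-defined and sign-reversing to be the only technical step, and these are inherited unchanged from the preceding proposition. The genuinely substantive observation is the collapse of the fixed-point contribution to the singleton subword $\omega=x$, which is what makes the coefficient $(-1)^{|\pi|-1}\ssfix(\pi)$ so clean.
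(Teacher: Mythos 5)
Your argument is correct, but it takes a much longer route than the paper. The paper's proof is a one-liner: $\sfix$ is itself the mesh pattern $(1,\{(0,1),(1,0)\})$ (the singleton with the squares covering $Q_2$ and $Q_4$ shaded) and $\ssfix$ is precisely its dual $(1,\{(0,0),(1,1)\})$, so the Reciprocity Theorem immediately gives $\sfix=\sum_\sigma(-1)^{|\sigma|-1}\ssfix(\sigma)\sigma$. What you do instead is apply the Inverse Theorem and then rerun the sign-reversing involution $\Psi$ from the $\exc_k$ proposition; this amounts to re-proving the Reciprocity Theorem in this special case by hand (toggling the minimal element of $Q_1\cup Q_3$ is exactly the involutive form of the subset-sum cancellation in the paper's proof of Reciprocity). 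All your steps check out --- $\Psi$ is well defined because strong-fixed-pointness of $x$ in $\omega$ depends only on $\al(\omega)\cap(Q_2\cup Q_4)$, and the fixed-point contribution does collapse to $\omega=\{x\}$ since for $x\in\SSF(\pi)$ every other letter lies in $Q_2\cup Q_4$. The trade-off: your method is the one that generalizes to statistics like $\exc_k$ that are not single mesh patterns, whereas recognizing $\sfix$ and $\ssfix$ as complementary mesh patterns is both shorter and explains \emph{why} the coefficient is so clean --- which is presumably the point the authors wanted to make.
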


\begin{proof}
  Because $\sfix=\pattern{scale=\scl}{1}{1/1}{0/1,1/0}$ and
  $\ssfix=\pattern{scale=\scl}{1}{1/1}{0/0,1/1}$ the result follows
  from the Reciprocity Theorem.
\end{proof}

\section{Euler numbers}

A permutation $\pi \in \sym_n$ is said to be \emph{alternating} if
$$\pi(1)> \pi(2)<\pi(3)>\pi(4)<\cdots.
$$ 
Clearly the set of alternating
permutations are exactly the permutations that avoid the vincular/mesh patterns
$$
\pattern{scale=\scl}{2}{1/1,2/2}{0/0,0/1,0/2,1/0,1/1,1/2},\quad
\pattern{scale=\scl}{3}{1/1,2/2,3/3}{1/0,1/1,1/2,1/3,2/0,2/1,2/2,2/3}
\quad\text{and}\quad
\pattern{scale=\scl}{3}{1/3,2/2,3/1}{1/0,1/1,1/2,1/3,2/0,2/1,2/2,2/3}.
$$
In 1879, Andr\'e~\cite{Andre} showed that the
number of alternating permutations in $\sym_n$ is the
\emph{Euler number} $E_n$ given by
$$
\sum_{n \geq 0} E_n x^n/n! = \sec x + \tan x.
$$
There are several other sets of permutations enumerated by the
Euler numbers, see \cite{Sts}.  A \emph{simsun} permutation may be
defined as a permutation $\pi \in \sym_n$ for which for all $1\leq i
\leq n$, after removing the $i$ largest letters of $\pi$, the
remaining word has no double descents. In terms of mesh patterns, a
permutation is simsun if and only if it avoids the pattern
$$\SimSun = \simsun.
$$ 
simsun permutations are central in describing the action of the
symmetric group on the maximal chains of the partition lattice, and
the number of simsun permutations in $\sym_n$ is the Euler number
$E_{n+1}$, see \cite{Sund}. 

Another important class of permutations counted by the Euler numbers
are the \emph{Andr\'e permutations} of various kinds introduced by
Foata and Sch\"utzenberger~\cite{FSc73} and further studied by Foata and 
Strehl~\cite{FSt74}.
If $\pi \in \sym_n$ and $x=\pi(i) \in [1,n]$ let $\lambda(x),\rho(x)
\subset [1,n]$ be defined as follows. Let $\pi(0)=\pi(n+1)=-\infty$.
\begin{itemize}
\item $\lambda(x)= \{ \pi(k): j_0 < k <i\}$ where $j_0=\max\{ j : j<i
  \text{ and } \pi(j)<\pi(i)\}$, and
\item $\rho(x)= \{ \pi(k): i < k <j_1\}$ where $j_1=\min\{ j : i<j
  \text{ and } \pi(j)<\pi(i)\}$.
\end{itemize}
A permutation $\pi \in \sym_n$ is an \emph{Andr\'e permutation of the
  first kind} if
$$
\max \lambda(x) \leq \max \rho(x)
$$ 
for all $x \in [1,n]$, where $\max \emptyset =-\infty$. In
particular, $\pi$ has no double descents and $\pi(n-1) <
\pi(n)=n$. The concept of Andr\'e permutations of the first kind
extends naturally to permutation of any finite totally ordered
set. The following recursive description of Andr\'e permutations of
the first kind follows immediately from the definition.

\begin{lemma}\label{decom}
  Let $\pi \in \sym_{n}$ be such that $\pi(n)=n$. Write $\pi$ as
  the concatenation $\pi=L1R$. Then $\pi$ is an Andr\'e permutation of
  the first kind if and only if $L$ and $R$ are Andr\'e permutations
  of the first kind.
\end{lemma}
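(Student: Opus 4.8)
The defining property of an André permutation of the first kind is a \emph{local} condition, one inequality $\max\lambda(x)\le\max\rho(x)$ per letter $x$, where each side is read off from the nearest strictly smaller letters on either side of $x$. The plan is to exploit that the letter $1=\min\pi$ is smaller than every other letter, so it behaves exactly like the sentinels $\pi(0)=\pi(n+1)=-\infty$: it blocks the search for a nearest smaller neighbour. I would therefore show that splitting $\pi=L1R$ decouples the local conditions, so that the condition at each letter of $L$ (resp.\ $R$) computed inside $\pi$ coincides with the same condition computed in the standalone word $L$ (resp.\ $R$), while the condition at the letter $1$ itself is automatic. Here one uses that the notion of André permutation extends verbatim to words on any finite totally ordered set, as noted above, so ``$L$ is André'' and ``$R$ is André'' make sense.

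First I would dispose of the letter $1$. Its nearest smaller neighbours are forced to be the sentinels, so $\lambda(1)$ is the set of letters of $L$ and $\rho(1)$ is the set of letters of $R$. For $n=1$ the statement is trivial, and for $n\ge 2$ the letter $1$ is not in the last position, so $\pi(n)=n$ gives $n\in R$; hence $\max\rho(1)=n>\max\lambda(1)$ and the inequality at $1$ holds unconditionally.

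Next, for a letter $x$ in $L$, write $\lambda_\pi,\rho_\pi$ for the sets computed inside $\pi$ and $\lambda_L,\rho_L$ for those computed inside $L$ (with its own right sentinel $-\infty$). The equality $\lambda_\pi(x)=\lambda_L(x)$ is immediate because every position to the left of $x$ already lies in $L$. For the right-hand set I would argue that, because $1<x$, the nearest smaller letter to the right of $x$ occurs no later than the position of $1$; a short case split—according to whether a smaller letter already appears within $L$ to the right of $x$, or not—shows that $\rho_\pi(x)=\rho_L(x)$, the sentinel at the end of $L$ playing precisely the blocking role that $1$ plays inside $\pi$. The argument for a letter $x$ in $R$ is the mirror image, with left and right interchanged, giving $\rho_\pi(x)=\rho_R(x)$ and $\lambda_\pi(x)=\lambda_R(x)$. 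It follows that the full list of inequalities defining ``$\pi$ is André'' is exactly the (vacuous) inequality at $1$ together with the inequalities defining ``$L$ is André'' and ``$R$ is André'', which yields the claimed equivalence.

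I expect the only delicate point to be the bookkeeping in the previous paragraph: verifying that $\rho_\pi(x)=\rho_L(x)$ for $x\in L$ (and symmetrically $\lambda_\pi(x)=\lambda_R(x)$ for $x\in R$) requires checking that the terminal sentinel of the standalone word produces the same ``valley'' as the barrier letter $1$ does in $\pi$, including the boundary case where the nearest smaller neighbour already lies strictly inside $L$. Everything else is a direct translation of the definition.
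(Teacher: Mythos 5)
Your proof is correct: the paper gives no argument at all for this lemma, asserting only that it ``follows immediately from the definition,'' and your careful unpacking --- the letter $1$ acts as a sentinel that truncates the searches for $j_0$ and $j_1$, so the local conditions at letters of $L$ and $R$ coincide with those computed in the standalone words, while the condition at $1$ is automatic since $n\in R$ --- is exactly the definitional verification the authors leave implicit. No gaps; the boundary cases ($n=1$, and the nearest smaller neighbour being $1$ itself versus lying strictly inside $L$ or $R$) are all handled.
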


\begin{theorem}
  Let $\pi \in \sym_{n}$. Then $\pi$ is
  an Andr\'e permutation of the first kind if and only if it avoids
  $$
  \Andre = \andre
  \quad\text{and}\quad
  \pattern{scale=\scl}{2}{1/2, 2/1}{ 2/0, 2/1, 2/2 }.
  $$
\end{theorem}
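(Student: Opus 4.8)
The plan is to prove the equivalence by induction on $n=|\pi|$, using the recursive description in Lemma~\ref{decom}. Before inducting I would decode the two forbidden patterns combinatorially. The second pattern, $(21,\{2\}\times[0,2])$, shades the entire strip to the right of its lower point, so an occurrence is a descent $\pi(i)>\pi(j)$ in which $j$ is the last position; hence $\pi$ avoids it if and only if $\pi(n)=n$ (and, for any word, avoidance means the word ends in its maximum). Reading the first pattern, $\Andre$, off its figure, an occurrence is a decreasing subsequence $u>v>w$ at positions $p_1<p_2<p_3$ such that every entry lying positionally between $p_1$ and $p_3$ takes a value strictly between $w$ and $u$; I will call this a \emph{trapped descending triple}. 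The base case $n\leq 1$ is immediate.

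Since an André permutation satisfies $\pi(n)=n$, whenever $\pi(n)\neq n$ both sides of the claimed equivalence fail: $\pi$ is not André and $\pi$ contains the second pattern. I may therefore assume $\pi(n)=n$ and write $\pi=L1R$ as in Lemma~\ref{decom}. By that lemma $\pi$ is André if and only if $L$ and $R$ are, and by the induction hypothesis (applied to the standardizations $\bar L,\bar R$, which are shorter and for which both André-ness and pattern avoidance are order-isomorphism invariants) this holds if and only if $\bar L$ and $\bar R$ avoid both patterns. It thus suffices to show that $\pi$ avoids both patterns if and only if $L$ and $R$ do. Two remarks trivialize the second pattern here: $\pi$ avoids it because $\pi(n)=n$, and $R$ ends in $n=\max R$ so $R$ avoids it automatically. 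Only the first pattern needs care.

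The heart of the argument is to classify the trapped descending triples of $\pi$ relative to the position of the global minimum $1$. Because $1$ is below every value, an entry $1$ lying strictly inside the positional span of a triple but not belonging to it would sit in a shaded cell (below $w$), which is forbidden; and since $1$ is minimal it can only play the role of the bottom entry $w$. Hence every trapped descending triple is contained in $L$, contained in $R$, or is a \emph{boundary} triple with bottom $w=1$, $p_1,p_2\in L$, and $p_3$ the position of $1$. A triple contained in $L$ (resp.\ $R$) is exactly an occurrence of the first pattern in $\bar L$ (resp.\ $\bar R$), since $L$ and $R$ are contiguous blocks, so all intervening entries lie in the same block. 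The decisive step is that a boundary triple exists if and only if $L$ does not end in its maximum, i.e.\ if and only if $\bar L$ contains the \emph{second} pattern: the trapping conditions force $\pi(p_1)$ to exceed every later entry of $L$ while some entry still follows it, which happens precisely when $\max L$ is not the last entry of $L$. Assembling these facts, $\pi$ contains the first pattern if and only if $\bar L$ or $\bar R$ contains the first pattern or $\bar L$ contains the second pattern; together with $R$ avoiding the second pattern, this is equivalent to ``$L$ and $R$ avoid both patterns,'' which closes the induction.

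The main obstacle is precisely the boundary case: the recursion fails to close if boundary triples are overlooked, and the key realization is that they are controlled not by the first pattern in $L$ but by the second pattern in $L$. Verifying the clean correspondence ``boundary triple $\Leftrightarrow$ $L$ does not end in its maximum'' — in particular checking that the two trapping conditions collapse to the single statement that $\pi(p_1)$ is the maximum of the suffix of $L$ beginning at $p_1$ — is the one place demanding genuine care; the remainder is bookkeeping about which entries of $\tau$ can legally fall in shaded cells.
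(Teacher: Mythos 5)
Your proposal is correct and follows essentially the same route as the paper: reduce via the decomposition $\pi=L1R$ and Lemma~\ref{decom} to the claim that $\pi$ avoids $\Andre$ if and only if $L$ and $R$ avoid both patterns. The paper simply asserts that equivalence, whereas you verify it by classifying occurrences of $\Andre$ into those inside $L$, those inside $R$, and the boundary triples ending at the minimum, correctly identifying the last class with occurrences of the second pattern in $L$; this is exactly the detail the paper's terse proof leaves to the reader.
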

\begin{proof}
  Note that $\pi\in\sym_{n}$ avoids the second pattern if and only if $\pi(n)=n$.
  Write $\pi$ as $\pi = L1R$. Then $\pi$ avoids $\Andre$ if and only if
  $L$ and $R$ avoid the two patterns. 
  Thus the set of all permutations that avoid the two patterns
  have the same recursive description as the set of all
  Andr\'e permutation of the first kind, and hence the sets agree.
\end{proof}

\begin{corollary}
  $|\sym_n(\Andre)| =E_{n+1}$.
\end{corollary}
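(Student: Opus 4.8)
The plan is to reduce the statement to a count of Andr\'e permutations of the first kind by appending a new maximum. Note first that $\sym_n(\Andre)$ asks only for avoidance of the single pattern $\Andre$, so it is strictly larger than the set of Andr\'e permutations, which by the preceding theorem avoid \emph{both} $\Andre$ and the pattern $(21,\{2\}\times[0,2])$. I would define $\phi:\sym_n(\Andre)\to\sym_{n+1}$ by letting $\phi(\tau)$ be the word $\tau$ with the letter $n+1$ appended at the end, and claim that $\phi$ is a bijection onto the set of Andr\'e permutations of the first kind in $\sym_{n+1}$.

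To justify this, observe that $\phi(\tau)(n+1)=n+1$, so $\phi(\tau)$ automatically avoids the second pattern above (as noted in the proof of the theorem, this is equivalent to having the maximum in the last position). It then remains to check that $\phi(\tau)$ avoids $\Andre$ if and only if $\tau$ does. The underlying permutation of $\Andre$ is $321$, so any occurrence is in particular a decreasing triple; since $n+1$ is the largest letter and sits in the last position, it cannot play any of the three roles in such a triple. Hence every occurrence of $\Andre$ in $\phi(\tau)$ avoids the last column, and, because the shaded cells of $\Andre$ are interior (lying strictly between chosen columns and below, resp.\ above, the chosen rows), such an occurrence restricts to one in $\tau$, and conversely. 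Thus the image of $\phi$ is exactly $\{\pi\in\sym_{n+1}:\pi\text{ avoids both patterns}\}$, which by the preceding theorem is the set of Andr\'e permutations of the first kind in $\sym_{n+1}$.

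It therefore suffices to show that the number $c_m$ of Andr\'e permutations of the first kind on an $m$-element totally ordered set equals $E_m$; the corollary then reads $|\sym_n(\Andre)|=c_{n+1}=E_{n+1}$. For this I would use Lemma~\ref{decom}: every nonempty Andr\'e permutation has its maximum in the last position, and writing it as $L1R$ and distributing the remaining letters between $L$ and $R$ (the maximum always lands in $R$) gives
$$c_m=\sum_{i=0}^{m-2}\binom{m-2}{i}c_i\,c_{m-1-i}\quad(m\geq2),\qquad c_0=c_1=1.$$
Passing to the exponential generating function $C(x)=\sum_m c_m x^m/m!$, the right-hand side is the binomial convolution of $(c_i)$ with $(c_{j+1})$, so the recurrence becomes $C''=C\,C'$. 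Integrating and using $C(0)=C'(0)=1$ yields $2C'=C^2+1$, whose solution is $C(x)=\tan(x/2+\pi/4)=\sec x+\tan x$. Hence $c_m=E_m$.

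The conceptual point, and the part most likely to be mis-stated, is the index shift from $E_n$ (Andr\'e permutations of $\sym_n$) to $E_{n+1}$ (permutations of $\sym_n$ merely avoiding $\Andre$); the append-a-maximum bijection is precisely what accounts for it. The main technical step is the verification that $\phi$ preserves avoidance in both directions, i.e.\ that the appended maximum can never contribute to an occurrence of $\Andre$ and that deleting it never spoils the interior mesh restrictions. The generating-function computation is then routine; alternatively one may simply invoke the classical result of Foata and Sch\"utzenberger that Andr\'e permutations of the first kind are counted by the Euler numbers, bypassing the differential equation entirely.
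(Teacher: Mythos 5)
Your proof is correct and matches the route the paper intends: the corollary is stated without proof as an immediate consequence of the preceding theorem, via exactly your append-the-maximum bijection between $\sym_n(\Andre)$ and the Andr\'e permutations of the first kind in $\sym_{n+1}$, together with the classical fact (Foata--Sch\"utzenberger) that the latter are counted by $E_{n+1}$. Your self-contained verification of that count from Lemma~\ref{decom} is just the recursion the paper records right after the corollary, so nothing here departs from the paper's argument.
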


Note that Lemma~\ref{decom} immediately implies a version of the
recursion formula for the Euler numbers
$$
E_{n+1}= \sum_{k=0}^{n-1} \binom {n-1} k E_{k+1}E_{n-k-1},
$$
where $E_0=1$. 

Using a computer, it is not hard to see that up to trivial symmetries
the only essentially different mesh patterns $p=(321,R)$ such that
$|\sym_n(p)| =E_{n+1}$ for all $n$ are $\SimSun$ and $\Andre$.

\section{New Mahonian Statistics}

There are many ways of expressing the permutation statistic $\inv$ as
a sum of mesh patterns. For instance, 
\begin{equation}\label{decinv}
\inv = 
\pattern{scale=\scl}{2}{1/2,2/1}{2/2} +
\pattern{scale=\scl}{3}{1/2,2/1,3/3}{3/2,3/3}.
\end{equation}
Indeed, given $\pi\in\sym_n$ we may partition the 
set of inversions\footnote{Here the set of inversions means the set of 
  occurrences of the pattern $21$, not the positions of the inversions.}  
of $\pi$ into
two sets as follows. Let $I^+(\pi)$ denote the set of inversions
that play the role of $21$ in some occurrence of $213$, and let $I^-(\pi)$
denote the set of inversions that do not play the role of $21$ in any
occurrence of $213$. Then the first pattern in the right-hand-side of 
\eqref{decinv} agrees with $\pi \mapsto |I^-(\pi)|$, and the second 
with $\pi \mapsto |I^+(\pi)|$.

There is a similar decomposition of non-inversions:
\begin{equation}\label{decainv}
12 = 
\pattern{scale=\scl}{2}{1/1,2/2}{2/2} +
\pattern{scale=\scl}{3}{1/1,2/2,3/3}{3/2,3/3}.
\end{equation}
Now $A^+(\pi)$ is the set of non-inversions that play the role
of $12$ in some occurrence of $123$, and  $A^-(\pi)$ is the set of
non-inversions that do not play the role of $12$ in any occurrence of
$123$.

Can we mix the patterns in \eqref{decinv} and \eqref{decainv} and
still get a Mahonian statistic? Let
$$
\mix = 
\pattern{scale=\scl}{2}{1/1,2/2}{2/2} +
\pattern{scale=\scl}{3}{1/2,2/1,3/3}{3/2,3/3}.
$$
We will prove that $\mix$ is Mahonian.  Since $\mix(\pi)=|A^-(\pi)| +
|I^+(\pi)|$ and $12(\pi)=|A^-(\pi)|+|A^+(\pi)|$ it suffices to find a
bijection $\psi$ that fixes $|A^-(\pi)|$ and is such that
$|A^+(\psi(\pi))|= |I^+(\pi)|$. In fact we will prove more. Let $M,I
\subseteq [n]$ be such that $|M|=|I|$ and $n \in M\cap I$, and let
$\sym_n(M,I)$ be the set of permutations in $\sym_n$ that have
right-to-left maxima exactly at the positions indexed by $I$, and set
of values of the right-to-left maxima equal to $M$.  Let $\pi \in
\sym_n(M,I)$, and define two functions 
$\ci(\pi), \ca(\pi):[n]\setminus M \rightarrow [n-1]$ by
\begin{align*}
  \ci(\pi)(y) &= |I^+_y|,
  &\text{where }\, I^+_y &= \{ x : (y,x) \in I^+(\pi) \},\text{ and}\\
  \ca(\pi)(y) &= |A^+_y|,
  &\text{where }\, A^+_y &= \{ x : (x,y) \in A^+(\pi) \}.
\end{align*}

\begin{lemma}\label{code}
  Let $M,I \subseteq [n]$ be such that $|M|=|I|$ and $n \in M\cap I$. Then 
  $$
  \ci(\pi) = \ci(\sigma) \text{ if and only if } \pi = \sigma,
  $$
  and
  $$
  \ca(\pi) = \ca(\sigma) \text{ if and only if } \pi = \sigma,
  $$
  for all $\pi, \sigma \in \sym_n(M,I)$.
\end{lemma}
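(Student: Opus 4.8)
The plan is to prove that each of $\ci$ and $\ca$ is injective on $\sym_n(M,I)$ by \emph{reconstructing} $\pi$ from its code together with the fixed data $(M,I)$. The first step is to translate the pattern-theoretic definitions of $I^+(\pi)$ and $A^+(\pi)$ into plain positional counts. The key observation is that a value $y\in[n]\setminus M$ is, by definition, not a right-to-left maximum, so there is always some larger value to its right; this is precisely the ``$3$'' needed to promote a $21$ (resp.\ $12$) into a $213$ (resp.\ $123$). Thus the membership conditions defining $I^+$ and $A^+$ collapse to conditions about the existence of such a larger entry, which I will make explicit below.

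For $\ca$: I claim $\ca(\pi)(y)$ equals the number of values smaller than $y$ that lie to the left of $y$. Indeed, since $y$ is not a right-to-left maximum there is a larger entry to its right, so every non-inversion $(x,y)$ with $x<y$ and $x$ to the left of $y$ completes to an occurrence of $123$ and hence lies in $A^+(\pi)$. Moreover every right-to-left maximum lying to the left of $y$ has value exceeding $y$ (a right-to-left maximum dominates everything to its right), so the smaller-left entries counted by $\ca(\pi)(y)$ are all non-right-to-left-maxima. Consequently $\ca(\pi)$ is exactly the classical inversion code (``number of smaller entries to the left'') of the subword of $\pi$ formed by the entries in $[n]\setminus M$. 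Since this code is a bijection from arrangements of the fixed value set $[n]\setminus M$ onto their codes, equal $\ca$-values force equal subwords; as the positions of the right-to-left maxima are $I$ and their values are $M$ taken in decreasing order (both determined by $(M,I)$), the whole of $\pi$ is then recovered, giving injectivity of $\ca$.

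For $\ci$: a symmetric analysis shows that $\ci(\pi)(y)$ equals the number of values smaller than $y$ lying strictly to the right of $y$ but to the left of $k^{\ast}(y)$, where $k^{\ast}(y)$ is the rightmost position carrying a value greater than $y$. This windowing appears because the ``$3$'' of a $213$ must sit to the right of the ``$1$''. The entry at $k^{\ast}(y)$ is forced to be a right-to-left maximum, in fact the smallest-valued right-to-left maximum exceeding $y$, so $k^{\ast}(y)$ can be read off from $(M,I)$; and everything to the right of $k^{\ast}(y)$ is smaller than $y$. Adding to $\ci(\pi)(y)$ the number of non-right-to-left-maxima lying to the right of $k^{\ast}(y)$ --- a quantity depending only on $(M,I)$, namely $n-k^{\ast}(y)$ minus the number of right-to-left maxima past position $k^{\ast}(y)$ --- recovers the number of smaller non-right-to-left-maxima to the right of $y$. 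This is the dual inversion code of the subword on $[n]\setminus M$, again a bijection, and the reconstruction concludes exactly as for $\ca$.

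The routine verifications are the bijectivity of the two standard subword codes and the decreasing-value/increasing-position pairing of the right-to-left maxima. \textbf{The main obstacle} is the positional simplification itself, and in particular isolating, in the $\ci$ case, the part of the count that genuinely encodes $\pi$ from the part fixed by $(M,I)$: one must show that the landmark $k^{\ast}(y)$ and the ``tail'' of entries beyond it are completely determined by $M$ and $I$, so that the windowed count differs from a genuine code only by a known constant. Once this separation is established, both cases reduce to the classical fact that a permutation of a given value set is determined by its inversion code.
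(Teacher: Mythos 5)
Your proof is correct and follows essentially the same strategy as the paper's: both show injectivity by arguing that the code, together with the fixed data $(M,I)$, determines the positions of the entries of $[n]\setminus M$ one value at a time from largest to smallest. Your version is somewhat more explicit, first reducing $\ca(\pi)$ and $\ci(\pi)$ (up to shifts determined by $(M,I)$, via the landmark $k^{\ast}(y)$) to the classical inversion codes of the subword of $\pi$ on $[n]\setminus M$, whereas the paper carries out the recursive reconstruction directly.
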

\begin{proof}
  Suppose that we know $\ci(\pi)$, and let $[n]\setminus M =
  \{s_1<\cdots<s_k\}$. Then $\ci(\pi)(s_k)$ tells us the position of
  $s_k$ in $\pi$, and recursively we can read off the position of
  $s_i$ from $\ci(\pi)(s_i)$, given that we know the positions of
  $s_{i+1}, \ldots, s_k$. Hence we can reconstruct $\pi$ from
  $\ci(\pi)$.

  Suppose that $s_1, \ldots, s_j$ are the elements of $[n]\setminus M$
  that are smaller than $\min(M)$. Then $\ca(\pi)(s_j)$ tells us the
  position of $s_j$ in $\pi$, and recursively we can read off the
  position of $s_i$ from $\ca(\pi)(s_i)$, given that we know the
  positions of $s_{i+1}, \ldots, s_j$. We can continue in the same way
  to read off the positions of the elements of $[n]\setminus M$ that
  are between $\min(M)$ and $\min(M\setminus \{\min(M)\})$ in
  size. Continuing this procedure we will recover $\pi$ from
  $\ca(\pi)$.
\end{proof}

\begin{theorem}\label{MI}
  Let $M,I \subseteq [n]$ be such that $|M|=|I|$ and $n \in M\cap
  I$. There is an involution $\psi : \sym_n(M,I) \rightarrow
  \sym_n(M,I)$ such that
  $$
  \big(\ca(\pi),\,\ci(\pi)\big) = \big(\ci(\psi(\pi)),\,\ca(\psi(\pi))\big),
  $$ for all $\pi \in \sym_n(M,I)$. Moreover, $\psi$ fixes $|A^-(\pi)|$.
\end{theorem}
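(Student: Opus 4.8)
The plan is to reinterpret $\ca$ and $\ci$ as concrete positional counts, prove that their sum is a constant depending only on $M$ and $I$, and then obtain $\psi$ as the complementation of codes. First I would record, for $y\in[n]\setminus M$, that $\ca(\pi)(y)$ equals the number of values smaller than $y$ lying to the left of $y$ in $\pi$, while $\ci(\pi)(y)$ equals the number of values smaller than $y$ lying strictly between the position of $y$ and the position of $w(y):=\min\{m\in M:m>y\}$, the smallest right-to-left maximum exceeding $y$. Both facts come straight from the definitions: the $12$ recorded by $A^+_y$ extends to a $123$ precisely because $y\notin M$, and the $21$ recorded by $I^+_y$ extends to a $213$ precisely when some value exceeding $y$ sits after the smaller entry, that is, before the position of $w(y)$.

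Next I would establish the identity that drives everything. Since $y\notin M$, the value $y$ precedes $w(y)$, so the values smaller than $y$ occurring before $w(y)$ split into those before $y$ and those between $y$ and $w(y)$; hence $\ca(\pi)(y)+\ci(\pi)(y)=C(y)$, the number of values smaller than $y$ occurring before $w(y)$. The crucial observation is that $C(y)$ is constant on $\sym_n(M,I)$: as $w(y)$ is a right-to-left maximum, every entry after it is smaller than $w(y)$, and any value $v$ with $y<v<w(y)$ shares the ceiling $w(y)$ and so must itself precede it; thus every entry after $w(y)$ is in fact smaller than $y$, giving $C(y)=(y-1)-(n-\mathrm{pos}(w(y)))$, which depends only on $M$, $I$, and $y$. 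The same bookkeeping shows that $|A^-(\pi)|=\sum_{m\in M}\bigl((\mathrm{pos}(m)-1)-(n-m)\bigr)$ is constant on $\sym_n(M,I)$, so the final clause of the theorem will be automatic once $\psi$ is seen to preserve $\sym_n(M,I)$.

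It then remains to show that the common image of $\ca$ and $\ci$ is the full box $\prod_{y\in[n]\setminus M}\{0,1,\dots,C(y)\}$. I would argue this through the insertion process behind Lemma~\ref{code} for $\ci$: starting from the skeleton $(M,I)$, insert the values that are not right-to-left maxima in decreasing order, placing each $y$ in an empty slot before $w(y)$, where $\ci(\pi)(y)$ counts the empty slots remaining between $y$ and $w(y)$. The step I expect to be the main obstacle is verifying that the number of empty slots before $w(y)$ at the moment $y$ is inserted is independent of the earlier choices. For this one checks that every already-placed value larger than $y$ is forced to lie before $w(y)$ — each value precedes its own ceiling, and a value $v>y$ has ceiling $w(v)\ge w(y)$, which therefore sits no later than $w(y)$, so $v$ itself sits before $w(y)$ — whence the number of occupied, and so of empty, slots before $w(y)$ depends only on $M$, $I$, and the rank of $y$ among the values outside $M$. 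The choices are thus independent, making $\ci$ a bijection onto the box; since the box is invariant under $c\mapsto C-c$ and $\ca=C-\ci$, the image of $\ca$ is the same box.

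Finally I would assemble the involution. By Lemma~\ref{code} and the surjectivity just proved, there is a unique $\psi(\pi)\in\sym_n(M,I)$ with $\ci(\psi(\pi))=\ca(\pi)$. Then $\ca(\psi(\pi))=C-\ci(\psi(\pi))=C-\ca(\pi)=\ci(\pi)$, which is the second coordinate identity, and applying the defining relation twice gives $\psi(\psi(\pi))=\pi$, so $\psi$ is an involution of $\sym_n(M,I)$. Combined with the constancy of $|A^-|$ on $\sym_n(M,I)$, this establishes the theorem.
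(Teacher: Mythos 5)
Your proof is correct, but it takes a genuinely different route from the paper's. The paper constructs $\psi$ explicitly as the composition of reversals $\psi_{B_1}\circ\psi_{B_2\cap B_1}\circ\cdots\circ\psi_{B_k\cap B_{k-1}}\circ\psi_{B_k}$, where $B_i$ is the set of entries smaller than and to the left of the $i$-th right-to-left maximum; it then checks that for each $y$ the relevant block reverses every pair in $A^+_y\cup I^+_y$ an odd number of times, so $(A^+_y,I^+_y)\mapsto(I^+_y,A^+_y)$, and invokes Lemma~\ref{code} only at the end to upgrade the bijection to an involution. You instead make $\psi$ implicit: you show that $\ca(\pi)(y)+\ci(\pi)(y)=C(y)$ is constant on $\sym_n(M,I)$ (because $w(y)$, the rightmost entry exceeding $y$, sits at a position determined by $M$ and $I$ alone and everything after it is smaller than $y$), prove by the insertion process that $\ci$ is a bijection from $\sym_n(M,I)$ onto the box $\prod_y\{0,\dots,C(y)\}$, and set $\psi=\ci^{-1}\circ\,\ca$, after which the swap of the two codes and the involution property follow formally from $\ca=C-\ci$ together with the injectivity in Lemma~\ref{code}. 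The delicate step you single out does go through: every previously inserted $v>y$ precedes its own ceiling $w(v)$, whose position is at most $\mathrm{pos}(w(y))$, so the count of free slots before $\mathrm{pos}(w(y))$ is choice-independent. The paper's approach buys an explicit, easily computed involution; yours buys the stronger facts that $|A^-|$ is constant on all of $\sym_n(M,I)$ (so the last clause is automatic), the enumeration $|\sym_n(M,I)|=\prod_y\bigl(C(y)+1\bigr)$, and a conceptual reason why the two codes are interchangeable, namely that they are complementary coordinates in a product of intervals.
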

\begin{proof}
  Let $M=\{m_1< \cdots< m_k\}$ and let $B_i$ be the set of entries of
  $\pi$ that are smaller than and to the left of $m_i$.  For $S
  \subseteq [n]$, let $\psi_S(\pi)$ be the permutation obtained by
  reversing the subword of $\pi$ that is a permutation on $S$. Define
  $\psi$ by
  $$
  \psi = \psi_{B_{1}} \circ \psi_{B_2\cap B_{1}} 
  \circ  \cdots \circ \psi_{B_{k-1}} \circ \psi_{B_k\cap B_{k-1}} \circ \psi_{B_k}. 
  $$
  For instance, with $\pi=125634$ we have $B_1=\{1,2,3\}$, $B_2=\{1,2,5\}$ and
  \begin{align*}
    \psi(\pi) 
    &= \psi_{B_1}\circ\psi_{B_1\cap B_2}\circ\psi_{B_2}(125634) \\
    &= \psi_{B_1}\circ\psi_{B_1\cap B_2}(521634) \\
    &= \psi_{B_1}(512634) \\
    &= 532614.
  \end{align*}
  It is easy to see that $\psi : \sym_n(M,I) \rightarrow \sym_n(M,I)$ and
  that $\psi$ fixes $|A^-(\pi)|$.

  For fixed $y$ we want to show that $(|A^+_y|, |I^+_y|) \mapsto
  (|I^+_y|, |A^+_y|)$ under $\psi$. If $S$ is a subset of a $B_j$ that
  does not contain $y$ then $|A^+_y|$ and $|I^+_y|$ are unchanged
  under $\psi_S$.  Let $r$ be the largest index for which $y \in B_r$,
  and let $s$ be the smallest index for which $y \in B_s$. Write
  $\psi$ as $\psi = \alpha \circ \beta \circ \gamma$ where
  $$
  \beta = \psi_{B_{s}} \circ \psi_{B_{s+1}\cap B_{s}} 
  \circ \cdots \circ \psi_{B_{r-1}} \circ \psi_{B_r\cap B_{r-1}} \circ \psi_{B_r}. 
  $$
  Now $y$ is not moved, and $(|A^+_y|, |I^+_y|)$ is conserved, by
  $\gamma$. Moreover $y$ will remain outside $B_1 \cup \cdots \cup
  B_{s-1}$ when we apply $\alpha$ after the action of $\beta \circ
  \gamma$.  Hence it remains to consider the effect on $(|A^+_y|,
  |I^+_y|)$ under $\beta$. Suppose $x \in A^+_y$. Then $\beta$
  switches $(x,y)$ an odd number of times. Hence $x \in I^+_y$ after
  having applied $\beta$. Since $A^+_y \cup I^+_y \subseteq B_s$, the
  set $A^+_y \cup I^+_y$ remains unchanged under the mappings defining
  $\beta$. It follows that $(A^+_y, I^+_y) \mapsto (I^+_y, A^+_y)$
  under $\beta$. We have proved that $ (\ca(\pi), \ci(\pi)) =
  (\ci(\psi(\pi)), \ca(\psi(\pi))) $. By Lemma~\ref{code} $\psi$ is an
  involution.
\end{proof}

Using a computer we have searched for Mahonian statistics that are of
the form $(12, R) + (\pi,S)$, with $\pi\in\sym_3$, $R\subseteq
[0,2]\times[0,2]$ and $S\subseteq [0,3]\times [0,3]$. Up to trivial
symmetries one new Mahonian statistic apart from $\mix$ was found. Namely
$$\smallskip
\mix'=
\pattern{scale=\scl}{2}{1/1,2/2}{1/2} +
\pattern{scale=\scl}{3}{1/2,2/3,3/1}{1/3,2/3}.\smallskip
$$
Again, $\mix'$ is, in a sense, a mix of $\inv$ and $12$. To be more precise, let
$$
S_1=\pattern{scale=\scl}{2}{1/1,2/2}{1/2},\quad
S_2=\pattern{scale=\scl}{2}{1/2,2/1}{1/2},\quad
T_1=\pattern{scale=\scl}{3}{1/1,2/3,3/2}{1/3,2/3}\quad\text{and}\quad
T_2=\pattern{scale=\scl}{3}{1/2,2/3,3/1}{1/3,2/3}.
$$ Then $12=S_1+T_1$, $\inv=S_2+T_2$ and $\mix'=S_1+T_2$. 
We note here that $S_1$ and $T_1$ have appeared before in the
literature: $S_1$ measure the major cost of the in-situ permutation
algorithm~\cite{KPT, Knuth}; the statistic $\inv + T_1$ is identical
to $\mathrm{lbsum}$ in \cite{DR}.

Let
$\tpqbin{n}{k}= [n]!/([k]![n-k]!)$ denote the usual $q_1,q_2$-binomial
coefficient, where $[n]!=[1][2]\dots[n]$ and
$[n]=q_1^{n-1} + q_1^{n-2}q_2 +\dots+ q_1q_2^{n-2} + q_2^{n-1}$.
The $q_1,q_2$-derivative of a function $f(x)$ is defined by
$$
\left(\frac{d}{dx}\right)_{q_1,q_2}f(x) = \frac{f(q_1x)-f(q_2x)}{q_1x - q_2x}.
$$
Let
$$
F_n  = \sum_{\pi\in\sym_n}p_1^{S_1(\pi)}p_2^{S_2(\pi)}q_1^{T_1(\pi)}q_2^{T_2(\pi)}
$$ record the joint distribution of the four
permutation statistics $S_1$, $S_2$, $T_1$, and $T_2$, and let
$\F(x) = \sum_{n\geq 0} F_n x^n / [n]!$ be the
$q_1,q_2$-exponential generating function for $\{F_n\}_{n\geq
  0}$. 

\begin{theorem}\label{F_n}
  For $n\geq 0$,
  $$F_{n+1} = \sum_{k=0}^n \pqbin{n}{k}p_1^kp_2^{n-k}F_kF_{n-k}.
  $$
Moreover $\displaystyle{\left(\frac{d}{dx}\right)_{q_1,q_2}\!\F(x) = \F(p_1 x)\F(p_2 x)}.$ 
\end{theorem}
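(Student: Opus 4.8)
The plan is to prove the recursion first and then read off the differential equation from it by a routine coefficient comparison. To prove the recursion I would decompose each permutation $\pi\in\sym_{n+1}$ according to the position of its largest entry, writing $\pi = L\,(n+1)\,R$, where $L$ and $R$ are the (possibly empty) factors to the left and right of $n+1$. If $|L|=k$ then $|R|=n-k$, and as $k$ ranges over $0,\dots,n$ we recover exactly the range of summation. The standardizations $\mathrm{std}(L)$ and $\mathrm{std}(R)$ are permutations in $\sym_k$ and $\sym_{n-k}$; the remaining freedom is the choice of which $k$ of the values $\{1,\dots,n\}$ land in $L$, and this choice is what the coefficient $\pqbin{n}{k}$ will encode.

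The heart of the argument is to track how $S_1,S_2,T_1,T_2$ behave under this decomposition. First I would record the contribution of the maximum itself. Since nothing lies above $n+1$, the top shadings of all four patterns are automatically empty whenever $n+1$ plays the role of the largest point of the pattern. Consequently $n+1$ together with each of the $k$ entries of $L$ forms an occurrence of $S_1$, and together with each of the $n-k$ entries of $R$ an occurrence of $S_2$; this accounts for the factor $p_1^k p_2^{n-k}$. Likewise, with $n+1$ as the middle (and hence largest) point, each pair consisting of one entry of $L$ and one entry of $R$ is an occurrence of $T_1$ if it is a non-inversion and of $T_2$ if it is an inversion.

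The crucial structural point, and the step I expect to be the main obstacle, is to show that there are no other straddling occurrences: every occurrence of $S_1,S_2,T_1,T_2$ not already counted lies entirely inside $L$ or entirely inside $R$. This is exactly where the shadings do their work. Any occurrence whose columns lie on both sides of position $k+1$ but which does not use $n+1$ would have $n+1$ strictly between its first and last columns, and since $n+1$ exceeds the value playing the role of the pattern's largest point, it would fall inside one of the top shaded cells $(1,2)$, resp.\ $(1,3)$ and $(2,3)$, contradicting the definition of an occurrence. Hence the only occurrences crossing from $L$ to $R$ are those through $n+1$, already accounted for, while occurrences confined to $L$ (resp.\ $R$) depend only on relative order and so agree with the statistic of $\mathrm{std}(L)$ (resp.\ $\mathrm{std}(R)$). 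This yields $S_1(\pi)=S_1(L)+S_1(R)+k$, together with the analogous splittings for $S_2,T_1,T_2$, the last two picking up the numbers of non-inversions and inversions among the $L$--$R$ value pairs.

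Summing the monomial $p_1^{S_1}p_2^{S_2}q_1^{T_1}q_2^{T_2}$ over $\pi\in\sym_{n+1}$ then factors: the $L$- and $R$-parts give $F_k$ and $F_{n-k}$, the maximum gives $p_1^kp_2^{n-k}$, and the sum over which $k$ values go into $L$ gives $\sum_{S} q_1^{\#\,\text{non-inversions}}q_2^{\#\,\text{inversions}}$, which by the standard interpretation of the $q_1,q_2$-binomial coefficient as the inversion generating function over $k$-subsets equals $\pqbin{n}{k}$; this proves the recursion. Finally, for the differential equation I would use $\left(\frac{d}{dx}\right)_{q_1,q_2} x^n = [n]\,x^{n-1}$, so that applying the operator to $\F(x)$ term by term gives $\sum_{n\ge0}F_{n+1}\,x^n/[n]!$. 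Comparing this with the ordinary Cauchy product $\F(p_1x)\F(p_2x)$, whose coefficient of $x^n/[n]!$ is $\sum_k \pqbin{n}{k}p_1^kp_2^{n-k}F_kF_{n-k}$ (using $[n]!/([k]![n-k]!)=\pqbin{n}{k}$), the recursion forces the two series to agree, completing the proof.
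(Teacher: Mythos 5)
Your proposal is correct and follows essentially the same route as the paper: decompose $\pi=L(n+1)R$, verify the four splitting identities for $S_1,S_2,T_1,T_2$ (with the cross terms counted by $k$, $n-k$, and the non-inversions/inversions between $\al(L)$ and its complement), identify the inner sum over subsets with $\pqbin{n}{k}$, and read off the $q_1,q_2$-differential equation by comparing coefficients. If anything, you supply more detail than the paper does on why no occurrence can straddle the maximum without using it.
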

\begin{proof}
  Let $\pi\in\sym_{n+1}$ and write $\pi=L(n+1)R$. Let $A=\al(L)\subseteq[n]$
  be the letters in $L$. Then
  \begin{align*}
    S_1(\pi) &= S_1(L) + S_1(R) + k, \\
    S_2(\pi) &= S_2(L) + S_2(R) + n-k, \\
    T_1(\pi) &= T_1(L) + T_1(R) + k(n-k) - \gamma(A), \\
    T_2(\pi) &= T_2(L) + T_2(R) + \gamma(A),
  \end{align*}
  where $k=|A|$ and
  $\gamma(A)=\{(i,j): i\in A, j\in[n]\setminus A, i>j \}$. Thus
  $$F_{n+1} =
  \sum_{k=0}^n\left(\sum_{A\in\binom{[n]}{k}}
  q_1^{k(n-k)-\gamma(A)}q_2^{\gamma(A)}\right) p_1^k p_2^{n-k} F_k F_{n-k}.
  $$
  We also have, e.g. from \cite[Prop 1.3.17, $m=2$, $a_1=k$, $a_2=n-k$]{St},
  that
  $$\sum_{A\in\binom{[n]}{k}} q_1^{k(n-k) - \gamma(A)}q_2^{\gamma(A)} = \pqbin{n}{k},
  $$
  which proves the recursion formula. The equation for the $q_1, q_2$-exponential generating function follows. 
\end{proof}
The fact that $\mix'$ is Mahonian now follows from the symmetry of the variables in the recursion formula. 
\begin{corollary}
  The statistic $\mix'$ is Mahonian.
\end{corollary}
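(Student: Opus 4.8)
The plan is to show that $\mix'$ is equidistributed with $\inv$ on $\sym_n$, which is exactly what it means for $\mix'$ to be Mahonian. The key tool is the joint generating function $F_n = F_n(p_1,p_2,q_1,q_2)$ of Theorem~\ref{F_n}. Since $\mix' = S_1 + T_2$, setting $p_1 = q_2 = t$ and $p_2 = q_1 = 1$ selects the exponents of $S_1$ and $T_2$ while collapsing the contributions of $S_2$ and $T_1$, so that
$$
F_n(t,1,1,t) = \sum_{\pi\in\sym_n} t^{\mix'(\pi)}.
$$
Likewise, since $\inv = S_2 + T_2$, the specialization $p_1 = q_1 = 1$, $p_2 = q_2 = t$ gives
$$
F_n(1,t,1,t) = \sum_{\pi\in\sym_n} t^{\inv(\pi)}.
$$
These two specializations are related by the single exchange $p_1 \leftrightarrow p_2$, so it suffices to prove that $F_n$ is invariant under this swap.

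First I would establish, by induction on $n$ and for all values of $q_1,q_2$, that $F_n(p_1,p_2,q_1,q_2) = F_n(p_2,p_1,q_1,q_2)$. The base cases $F_0 = F_1 = 1$ are symmetric. For the inductive step I would exchange $p_1$ and $p_2$ on the right-hand side of the recursion of Theorem~\ref{F_n}, apply the inductive hypothesis to each factor $F_k$ and $F_{n-k}$, and then reindex the sum by $k \mapsto n-k$. This reindexing restores the factor $p_1^k p_2^{n-k}$, and it leaves the binomial coefficient unchanged because $\pqbin{n}{k} = \pqbin{n}{n-k}$, which is immediate from the definition $\pqbin{n}{k} = [n]!/([k]![n-k]!)$. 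Hence $F_{n+1}(p_2,p_1,q_1,q_2) = F_{n+1}(p_1,p_2,q_1,q_2)$, completing the induction.

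Combining these observations yields $F_n(t,1,1,t) = F_n(1,t,1,t)$, so $\mix'$ and $\inv$ have the same distribution on $\sym_n$, and therefore $\mix'$ is Mahonian. The argument is short, and the only point requiring genuine care is the inductive step: one must verify that swapping $p_1 \leftrightarrow p_2$ together with the reindexing $k \mapsto n-k$ leaves the summand invariant, which is precisely where the symmetry $\pqbin{n}{k} = \pqbin{n}{n-k}$ of the $q_1,q_2$-binomial coefficient enters. (Alternatively, one could avoid invoking the symmetry of $F_n$ and instead specialize the recursion for $\mix'$ directly, checking by induction that $\sum_{\pi} t^{\mix'(\pi)} = [n]!$ using $\sum_{k=0}^{n} t^k = [n+1]$; but the equidistribution argument above is the cleaner route and is the content of the phrase ``symmetry of the variables in the recursion formula.'')
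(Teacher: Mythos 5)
Your proof is correct and is exactly the argument the paper intends: the paper compresses it into the single sentence that Mahonianness ``follows from the symmetry of the variables in the recursion formula,'' and your induction (swap $p_1\leftrightarrow p_2$, apply the hypothesis to $F_k$ and $F_{n-k}$, reindex $k\mapsto n-k$, and use $\pqbin{n}{k}=\pqbin{n}{n-k}$) together with the specializations $F_n(t,1,1,t)=\sum_\pi t^{\mix'(\pi)}$ and $F_n(1,t,1,t)=\sum_\pi t^{\inv(\pi)}$ supplies precisely the missing details.
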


\end{document}